   \definecolor{cites}{rgb}{0.75 , 0.00 , 0.00}  
   \definecolor{urls} {rgb}{0.00 , 0.00 , 1.00}  
   \definecolor{links}{rgb}{0.00 , 0.00 , 0.5}   
  \definecolor{gray}{rgb}{0.5,0.5,.5}
\newtheorem{theorem}{Theorem}
\newtheorem{lemma}[theorem]{Lemma}
\newtheorem{proposition}[theorem]{Proposition}
\newtheorem{corollary}[theorem]{Corollary}
\theoremstyle{definition}
\newtheorem{definition}[theorem]{Definition}
\theoremstyle{remark}
\newtheorem{remark}[theorem]{Remark}
\numberwithin{equation}{section}
\newcommand{\B}{\mathbb{B}}
\newcommand{\C}{\mathbb{C}}
\newcommand{\D}{\mathbb{D}}
\newcommand{\N}{\mathbb{N}}
\newcommand{\1}{\mathds{1}}
\newcommand{\Ac}{\mathcal{A}}
\newcommand{\Lc}{\mathcal{L}}
\renewcommand{\epsilon}{\varepsilon}
\DeclareMathOperator{\dist}{dist}
\DeclareMathOperator{\supp}{supp}
\DeclareMathOperator{\VMO}{VMO}
\newcommand{\from}{\colon}
\providecommand{\scpr}[2]{\left\langle #1, #2 \right\rangle}
\renewcommand{\sp}{\scpr}
\providecommand{\abs}[1]{\left\lvert#1\right\rvert}
\providecommand{\norm}[1]{\left\lVert#1\right\rVert}
\providecommand{\set}[1]{\left\{ #1\right\}}
\begin{document}

\title{Compact Hankel operators with bounded symbols}
\author{Raffael Hagger and Jani A.~Virtanen}

\date{}
\maketitle

\makeatletter
\def\blfootnote{\gdef\@thefnmark{}\@footnotetext}
\makeatother

\begin{abstract} We give a new proof of the result that the Hankel operator $H_f$ with a bounded symbol is compact on standard weighted Fock spaces $F^2_\alpha(\C^n)$ if and only if $H_{\bar f}$ is compact. Our proof uses limit operator techniques and extends to $F^p_\alpha(\C^n)$ when $1<p<\infty$. It also fully explains that this striking result is caused by the lack of nonconstant bounded analytic functions in the complex plane unlike in Bergman spaces $A^p_\alpha(\Omega)$. Furthermore, we show that the compactness of Hankel operators is independent of $p$ and $\alpha$ for both $F^p_\alpha(\C^n)$ and $A^p_\alpha(\Omega)$, where $\Omega$ is a bounded symmetric domain in $\C^n$.

\medskip

\noindent \textbf{MSC (2010):} Primary 47B35; Secondary 30H20, 47B07.

\noindent \textbf{Keywords:} Hankel operators, Fock spaces, Bergman spaces, compactness, limit operators.
\end{abstract}

\maketitle

\section{Introduction}

In this note we study Hankel operators on Bergman and Fock spaces simultaneously. For simplicity, we therefore denote both the standard weighted Bergman spaces of bounded symmetric domains $\Omega$ and the standard weighted Fock spaces of $\Omega = \C^n$ by $X^p_\alpha(\Omega)$ (see Section~\ref{preliminaries} for the precise definitions of these spaces). The ambient space will always be denoted by $L^p_{\alpha}(\Omega)$. Let $P_{\alpha}$ be the orthogonal projection of $L^2_\alpha(\Omega)$ onto $X^2_\alpha(\Omega)$. The operator $P_{\alpha}$ is usually called the Bergman projection and extends boundedly to a projection $L^p_\alpha(\Omega) \to X^p_\alpha(\Omega)$ in all cases we will consider here. The Hankel operator $H_f : X^p_\alpha(\Omega) \to L^p_\alpha(\Omega)$ is defined by
\begin{equation*}
H_f g = Q_{\alpha}(fg),
\end{equation*}
where $Q_{\alpha} = I-P_{\alpha}$ is the complementary projection of $P_{\alpha}$.

Many classical results about Hankel operators, such as the Nehari and Hartman theorems, have been obtained first in the setting of the Hardy space before their treatment in Bergman and Fock spaces; see \cite{MR2223704,MR1949210}. Regarding compactness, while there are similarities, there are also striking differences between the properties of Hankel operators on these three function spaces. One difference is that in the case of Hardy spaces most results are only available in dimension $n=1$, while for Hankel operators on Bergman and Fock spaces, most results are known for $n\geq 1$, as in our present work. Perhaps most importantly, many of the differences are explained by the size of the complement of the domain in $L^p$; e.g. the complement of $A^p$ is much larger than $A^p$ itself, while $H^p$ and its complement are of the same size.

In 1984, Axler posed the question of characterizing compact Hankel operators on the unweighted Bergman space $A^2(\D)$ of the open unit disk $\D$, and two years later, in \cite{MR0850538}, he showed that when $f\in A^2(\D)$, the Hankel operator $H_{\bar f}$ (with a conjugate analytic symbol) is compact if and only if $f$ is in the little Bloch space; that is, $(1-|z|^2)f'(z) \to 0$ as $|z|\to 1$. 

To state results for general bounded symbols, for each $\lambda\in\Omega$, denote by $\phi_\lambda$ the automorphism of $\Omega$ with the properties that $\phi_\lambda (\lambda) = 0$ and $\phi_\lambda\circ \phi_\lambda$ is the identity map on $\Omega$. For $\Omega = \D$ these are the usual M\"obius transformations
\begin{equation*}
\phi_\lambda(z) = \frac{\lambda-z}{1-\bar\lambda z} \quad (z \in \D).
\end{equation*}
In the early 1990s, Stroethoff and Zheng, independently for the unit disk $\Omega = \D$ and jointly for the unit ball $\Omega=\B_n$, characterized Hankel operators with bounded symbols $f\in L^\infty(\Omega)$ (see~\cite{StroeZhe} and the references therein). In particular, they showed that the Hankel operator $H_f$ is compact on $A^2(\Omega)$ if and only if for some (or equivalently, for all) $1\leq q<\infty$, we have
\begin{equation}\label{e:Stroethoff condition}
	\|Q(f\circ \phi_\lambda)\|_q \to 0
\end{equation}
as $\lambda\to \partial \Omega$.

In 1987, Zhu~\cite{MR1178032} proved that $H_f$ and $H_{\bar f}$ are simultaneously compact on $A^p_\alpha(\B_n)$ if and only if $f\in \VMO^p_r(\B_n)$, that is,
\begin{equation}\label{e:VMOp}
	\lim_{z\to \partial \B_n} \frac1{|B(z,r)|} \int_{B(z,r)} |f(w) - \hat f_r(z)|^p\, \mathrm{d}v(z) = 0,
\end{equation}
where $\hat f_r(z)$ denotes the Euclidean average of $f$ over the Bergman ball $B(z,r)$ and $|B(z,r)|$ denotes the Euclidean volume of $B(z,r)$. It is well known that $\VMO^p_r(\B_n)$ is independent of $r$ (see~\cite{MR1178032}); we set $\VMO^p(\B_n) = \VMO^p_1(\B_n)$. A generalization of Zhu's result to bounded symmetric domains when $p=2$ can be found in~\cite{MR1073289}. It is worth noting here that there are bounded symbols $f$ for which $H_f$ is compact on $A^p_\alpha(\B_n)$ but $H_{\bar f}$ is not compact. An interesting example is a Blaschke product $b$ with zeros at $\alpha_k = 1-1/2^k$; that is,
\begin{equation*}
b(z) = \prod_{k=1}^\infty \frac{\alpha_k - z}{1-\bar \alpha_k z} \frac{|\alpha_k|}{\alpha_k}
\end{equation*}
for $z\in \D$. The function $b$ is not in the little Bloch space (see~\cite{MR1038664}) and hence $H_{\bar b}$ is not compact by Axler's result above. However, $H_b = 0$ because $b$ is a bounded analytic function in $\D$. 

We now compare the situation with Hankel operators on Fock spaces. The result on the simultaneous compactness of $H_f$ and $H_{\bar f}$ is the same as in Bergman spaces, which was first proven for Hankel operators on $F^2_\alpha(\C^n)$ by Bauer~\cite{MR2138695} in 2005. For an extension to $1<p<\infty$, see~\cite{L18, PeSchuVi}. Here $\VMO^p(\C^n)$ is defined analogously to $\VMO^p(\Omega)$ by replacing the Bergman balls with Euclidean balls (see~\cite{PeSchuVi}). Regarding~\eqref{e:Stroethoff condition}, Stroethoff~\cite{Stroethoff} showed that the same condition is both sufficient and necessary for $H_f$ to be compact on $F^2_\alpha(\C^n)$ (but proved only for $q=2$ unlike in the Bergman space where the general case $1<q<\infty$ is used for subsequent results). 

What is very different about Hankel operators on Fock spaces is that, for bounded symbols, $H_f$ is compact if and only if $H_{\bar f}$ is compact. This was proven for $p=2$ by Berger and Coburn~\cite{MR0882716} in 1987 and by Stroethoff~\cite{Stroethoff} in 1992 using more elementary methods. As for the reason for this difference, Zhu~\cite{MR2934601} recently commented: ``{\it A partial explanation for this difference is probably the lack of bounded analytic or harmonic functions on the entire complex plane.}'' This naturally suggests that Berger and Coburn's result should remain true for the other Fock spaces $F^p_\alpha$ with $1<p<\infty$. However, the previously used techniques seem unsuitable when $p\neq 2$. Let us also mention that there are several characterizations of compact Hankel operators with more general $L^1$-symbols due to Luecking and many others, but we will not discuss them here as their results and methodologies are rather function-theoretic and also not suitable for our purposes.

Several compactness characterizations for general bounded linear operators on Bergman and Fock spaces are available in the literature. These usually involve the Berezin transform or the normalized reproducing kernels $k_z \in X^2_\alpha$ (see Section \ref{preliminaries} for definitions). For example, Su\'arez et al.~\cite{MiSuWi,Suarez} showed that a bounded linear operator $A$ on $A^p_\alpha(\B_n)$ is compact if and only if it is contained in the Toeplitz algebra and its Berezin transform vanishes at the boundary. Subsequently, similar results were obtained for Fock spaces \cite{BaIs,IMW15} and Bergman spaces over different domains such as the polydisk \cite{MW14}, bounded symmetric domains \cite{Hagger2}, the Thullen domain \cite{HW20} or general Bergman type function spaces \cite{MiWi}. These results do not directly apply to Hankel operators $H_f: X^p_{\alpha} \to L^p_{\alpha}$. However, for $p = 2$ there is a way around this by considering $H_f^*H_f$. This leads to the necessary condition $\|H_fk_z\|_{L^2_\alpha} \to 0$ as $z$ tends to the boundary of $\Omega$ (compare with Theorem \ref{thm1}). Our approach is more direct and also works for arbitrary $1<p<\infty$. It unifies the two cases, Bergman and Fock, and clearly explains the above mentioned difference between the two families of spaces, confirming Zhu's conjecture. More precisely, we will give an alternate proof of the Berger-Coburn theorem using limit operator techniques that works for all Fock spaces $F^p_\alpha(\C^n)$ with $1<p<\infty$ and fully explains the difference between Bergman and Fock spaces (see Theorems \ref{thm2} and \ref{thm3}). Namely, it will become apparent that the only ingredient missing for the same proof to work for Hankel operators on Bergman spaces is Liouville's theorem. In Theorem~\ref{thm1} and Corollary~\ref{cor:thm1_2}, we also show that the compactness of $H_f : X^p_\alpha(\Omega) \to L^p_\alpha(\Omega)$ is independent of $p$ and $\alpha$, and hence generalize the results of Stroethoff and Zheng which state that~\eqref{e:Stroethoff condition} is sufficient and necessary for $H_f : X^2_\alpha(\Omega) \to L^2_\alpha(\Omega) $ to be compact.

\section{Preliminaries}\label{preliminaries}

Throughout this paper let $1<p<\infty$ and $n\in\N$. For $\alpha>0$, let $\mu_{\alpha}$ be the Gaussian measure defined on $\C^n$ by
\begin{equation}\label{e:Fock measure}
	\mathrm{d}\mu_{\alpha}(z) := \left(\frac{\alpha}{\pi}\right)^n \mathrm{e}^{-\alpha\abs{z}^2} \, \mathrm{d}v(z),
\end{equation}
where $v$ is the usual Lebesgue measure on $\C^n$, and set $L^p_{\alpha} := L^p(\C^n,\mathrm{d}\mu_{p\alpha/2})$. For $f \in L^p_{\alpha}$ and $z \in \C^n$, we define the weighted shift $C_z$ by
\begin{equation*}
(C_zf)(w) = f(\phi_z(w)) \, \mathrm{e}^{\alpha\sp{w}{z}-\frac{\alpha}{2}\abs{z}^2},
\end{equation*}
where $\phi_z(w) = w-z$ for $w\in \C^n$. It is easy to check that $C_z$ is a surjective isometry with $C_z^{-1} = C_{-z}$ and $C_zM_fC_z^{-1} = M_{f \circ \phi_z}$ for every multiplication operator $M_f$ with bounded symbol $f$. The \emph{Fock space} $F^p_{\alpha}$ is the closed subspace of all analytic functions in $L^p_{\alpha}$. The orthogonal projection $P_{\alpha}$ of $L^2_{\alpha}$ onto $F^2_{\alpha}$ is given by
\begin{equation*}(P_{\alpha}f)(z) = \int_{\C^n} f(w) \, \mathrm{e}^{\alpha\sp{z}{w}} \, \mathrm{d}\mu_{\alpha}(w)
\end{equation*}
and it extends to a bounded projection of $L^p_\alpha$ onto $F^p_\alpha$. $K_z(w) := \mathrm{e}^{\alpha\sp{w}{z}}$ is called the reproducing kernel. As
\begin{equation*}
\|K_z\|_{L^p_{\alpha}} = \mathrm{e}^{\frac{\alpha}{2}|z|^2},
\end{equation*}
we have $K_z \in F^p_{\alpha}$ for all $p \in (1,\infty)$, $\alpha > 0$ and $z \in \C^n$. We can therefore define the normalized reproducing kernel $k_z$ by
\begin{equation*}
k_z(w) =  \mathrm{e}^{\alpha\sp{w}{z}-\frac{\alpha}{2}|z|^2}.
\end{equation*}
Note that $C_z\1 = k_z$, where $\1$ denotes the constant function $1$. For $f \in L^{\infty}(\C^n)$, the Berezin transform of $f$ is defined as 
\begin{equation*}
\tilde{f}(z) = \sp{fk_z}{k_z}_{L^2_{\alpha}} = \int_{\C^n} (f \circ \phi_{-z})(w) \, \mathrm{d}\mu_{\alpha}(w).
\end{equation*}

Now let $\alpha>-1$ and suppose that $\Omega\subset \C^n$ is an irreducible bounded symmetric domain in its Harish-Chandra realization (see~\cite{Englis, Hagger2, Upmeier}). We define
\begin{equation*}
\mathrm{d}\mu_{\alpha}(z) := c_{\alpha} h(z,z)^{\alpha} \, \mathrm{d}v(z),
\end{equation*}
where $h$ is the so-called Jordan triple determinant (see e.g.~\cite{Englis,Hagger2,Upmeier}), and $c_{\alpha}$ is a constant such that $\mu_{\alpha}(\Omega) = 1$. For the unit ball $\Omega = \B_n$ we have $h(z,w) = 1 - \sp{z}{w}$ and $c_{\alpha} = \frac{\Gamma(n+\alpha+1)}{n!\Gamma(\alpha+1)}$. For $p \in (1,\infty)$ satisfying
\begin{equation} \label{p_constraint}
1 + \frac{(r-1)a}{2(\alpha+1)} < p < 1 + \frac{2(\alpha+1)}{(r-1)a}
\end{equation}
we set $L^p_{\alpha} := L^p(\Omega,\mathrm{d}\mu_{\alpha})$. Here, $r$ and $a$ are two constants depending on the domain $\Omega$ (see \cite{Englis} for a table and \cite{Hagger2} for a discussion of \eqref{p_constraint}). For $\Omega = \B_n$, it holds $r = 1$ and $a = 2$, hence every $p \in (1,\infty)$ is permitted.

For $f \in L^p_{\alpha}$ and $z \in \Omega$, we define the reflection $C_z$ by
\begin{equation*}
(C_zf)(w) = f(\phi_z(w))\frac{h(z,z)^{\frac{\alpha+g}{p}}}{h(w,z)^{\frac{2(\alpha+g)}{p}}},
\end{equation*}
where $g$ is another constant depending on $\Omega$ ($g = n+1$ for $\Omega = \B_n$) and $\phi_z$ is the (unique) geodesic symmetry interchanging $0$ and $z$ (M\"obius transforms in case $\Omega = \B_n$). It is not difficult to check that $C_z$ is a surjective isometry with $C_z^{-1} = C_z$ (observe the difference between Bergman and Fock spaces here) and $C_zM_fC_z^{-1} = M_{f \circ \phi_z}$ for every multiplication operator $M_f$ with bounded symbol $f$. The \emph{Bergman space} $A^p_{\alpha}$ is the closed subspace of all analytic functions in $L^p_{\alpha}$. The orthogonal projection $P_{\alpha}$ of $L^2_{\alpha}$ onto $A^2_{\alpha}$ is given by
\begin{equation*}
(P_{\alpha}f)(z) = \int_{\Omega} f(w)h(z,w)^{-\alpha-g} \, \mathrm{d}\mu_{\alpha}(w)
\end{equation*}
and it extends to a bounded projection of $L^p_\alpha$ onto $A^p_\alpha$ (provided that $p$ satisfies \eqref{p_constraint}). The projection $P_\alpha$ is referred to as the \emph{Bergman projection} and $K_z(w) := h(w,z)^{-\alpha-g}$ is called the reproducing kernel. It is again not difficult to check that $K_z \in A^p_{\alpha}$ for all $\alpha > -1$, $p \in (1,\infty)$ that satisfy \eqref{p_constraint} and $z \in \Omega$. However, in this case the norm of $K_z$ depends on $p$. We therefore define
\begin{equation*}
k_z(w) := \frac{K_z(w)}{\|K_z\|_{L^2_{\alpha}}} = \frac{h(z,z)^{\frac{\alpha+g}{2}}}{h(w,z)^{\alpha+g}}
\end{equation*}
and note $C_z\1 = k_z^{2/p}$. For $f \in L^{\infty}(\C^n)$, the Berezin transform of $f$ is again defined as
\begin{equation*}
\tilde{f}(z) = \sp{fk_z}{k_z}_{L^2_{\alpha}} = \int_{\Omega} (f \circ \phi_z)(w) \, \mathrm{d}\mu_{\alpha}(w).
\end{equation*}

When the results and their proofs are similar, we denote both  $A^p_\alpha$ and $F^p_\alpha$ by $X^p_\alpha$ and their corresponding domains by $\Omega$, that is,
\begin{equation*}
X^p_\alpha \in \{ A^p_\alpha, F^p_\alpha\}.
\end{equation*}
When $X^p_\alpha = A^p_\alpha$, it is understood that $\alpha>-1$ satisfies \eqref{p_constraint} and $\Omega$ is a bounded symmetric domain in $\C^n$, while for $X^p_\alpha = F^p_\alpha$, $\alpha>0$ and $\Omega = \C^n$, in which case $\partial \Omega$ denotes the point at infinity. Finally,  $d_{\Omega}$ will denote the Bergman metric for bounded symmetric domains and the Euclidean metric for $\Omega = \C^n$. The corresponding open balls are denoted by
\begin{equation*}
B(z,r) := \set{w \in \Omega : d_{\Omega}(w,z) < r}
\end{equation*}
for $z \in \Omega$ and $r > 0$.

For $f\in L^\infty(\Omega)$, we define the \emph{Hankel operator} $H_f : X^p_\alpha \to L^p_\alpha$ by
\begin{equation*}
H_f g = (I-P_\alpha)(fg)
\end{equation*}
and the \emph{Toeplitz operator} $T_f : X^p_\alpha \to X^p_\alpha$ by
\begin{equation*}
T_fg = P_\alpha(fg).
\end{equation*}
The following Banach algebra plays an important role in our analysis:
\begin{equation*}
\Ac := \set{f \in L^{\infty}(\Omega) : H_f \text{ is compact}}.
\end{equation*}

Our first auxiliary result is the following lemma, which demonstrates the role of $C_z$ in connection with compactness and plays an important role in the proofs of our main results. It is very similar to earlier results like \cite[Theorem 6.2]{BaIs}, \cite[Proposition 19]{Hagger2} or \cite[Theorem 9.3]{Suarez}. The only difference is that we need it for operators from $X^p_{\alpha}$ to $L^p_{\alpha}$.

\begin{lemma} \label{lem1}
If $H \from X^p_{\alpha} \to L^p_{\alpha}$ is a compact operator, then $\norm{C_zHC_z^{-1}g} \to 0$ for every $g \in X^p_{\alpha}$ as $z \to \partial\Omega$.
\end{lemma}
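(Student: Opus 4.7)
My plan is to exploit the isometry of $C_z$ together with the compactness of $H$ to reduce the claim to a weak convergence statement, and then establish that weak convergence by a pointwise computation on a dense subspace followed by a density argument. Since $C_z$ is a surjective isometry of $L^p_\alpha$, we have $\norm{C_z H C_z^{-1} g}_{L^p_\alpha} = \norm{H C_z^{-1} g}_{L^p_\alpha}$, so it suffices to show $\norm{H C_z^{-1} g}_{L^p_\alpha} \to 0$. The family $\{C_z^{-1} g\}_{z \in \Omega}$ is bounded in $X^p_\alpha$ by $\norm{g}$, and because compact operators send weakly null sequences to norm null ones, it is enough to establish $C_z^{-1} g \rightharpoonup 0$ in $X^p_\alpha$ as $z \to \partial \Omega$.

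To prove the weak convergence, I would first verify pointwise convergence $(C_z^{-1} g)(w) \to 0$ for each fixed $w \in \Omega$ whenever $g$ is a holomorphic polynomial. In the Fock case, $C_z^{-1} = C_{-z}$, so $(C_{-z} g)(w) = g(w+z)\, k_{-z}(w)$, and the Gaussian decay of $k_{-z}(w) = e^{-\alpha\sp{w}{z} - \frac{\alpha}{2}\abs{z}^2}$ in $z$ beats the polynomial growth of $g(w+z)$ as $z \to \infty$. In the Bergman case, $C_z^{-1} = C_z$ and $g \circ \phi_z$ is bounded on $\Omega$, while the prefactor proportional to a positive power of $h(z,z)$ in the definition of $C_z$ vanishes as $z \to \partial \Omega$ and the remaining negative power of $h(w,z)$ stays bounded for fixed $w$ (because $\abs{h(w,z)}$ is bounded above and below uniformly in $z$). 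In either case $(C_z^{-1} g)(w) \to 0$ pointwise in $w$. Because $X^p_\alpha$ is reflexive with continuous point evaluations and $\{C_z^{-1} g\}$ is norm-bounded, a routine argument (any weak cluster point of such a sequence must vanish at every point and hence be the zero function) upgrades this to the weak convergence $C_z^{-1} g \rightharpoonup 0$ for every polynomial $g$.

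To pass to an arbitrary $g \in X^p_\alpha$, use density of polynomials in $X^p_\alpha$ together with $\norm{C_z^{-1}} = 1$: given $\varphi$ in the dual and $\epsilon > 0$, choose a polynomial $g_\epsilon$ with $\norm{g - g_\epsilon} < \epsilon$, so that
\[
\abs{\varphi(C_z^{-1} g)} \leq \abs{\varphi(C_z^{-1} g_\epsilon)} + \norm{\varphi}\,\epsilon ;
\]
the first summand tends to $0$ by the previous step and $\epsilon > 0$ is arbitrary. The main obstacle I anticipate is the Bergman case: verifying that $\abs{h(w,z)}$ stays bounded above and bounded away from $0$ for fixed $w$ as $z \to \partial \Omega$ uniformly over all bounded symmetric domains in Harish-Chandra realization is standard but notation-heavy, whereas the Fock case reduces to a clean Gaussian estimate.
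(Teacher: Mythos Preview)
Your proposal is correct but takes a genuinely different route from the paper. The paper does not use weak convergence at all; instead it inserts a cutoff, writing
\[
\norm{C_zHC_z^{-1}g} \le \norm{HP_{\alpha}}\,\norm{M_{\chi_{B(0,r)}}C_z^{-1}g} + \norm{HP_{\alpha}M_{\1-\chi_{B(0,r)}}}\,\norm{g}.
\]
The commutation relation $C_zM_{\chi_{B(0,r)}}C_z^{-1}=M_{\chi_{B(z,r)}}$ turns the first term into $\norm{HP_{\alpha}}\,\norm{M_{\chi_{B(z,r)}}g}$, which tends to $0$ by dominated convergence; compactness of $H$ together with the strong convergence $M_{\1-\chi_{B(0,r)}}\to 0$ makes the second operator norm small for large $r$. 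Your approach---reduce to $\norm{HC_z^{-1}g}$, show $C_z^{-1}g\rightharpoonup 0$ via pointwise analysis on polynomials plus density, then invoke complete continuity of compact operators---is valid, but it forces the case-by-case inspection of the explicit form of $C_z$ (Gaussian decay in the Fock case, the behaviour of $h(z,z)$ and $h(w,z)$ in the Bergman case), which you rightly identify as the main burden. The paper's cutoff argument buys uniformity: it uses only that $C_z$ is an isometry and that $C_zM_{\chi}C_z^{-1}=M_{\chi\circ\phi_z}$, so it works identically in both settings without ever touching the Jordan triple determinant or kernel estimates.
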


\begin{proof}
Let $g \in X^p_{\alpha}$. Then
\begin{align*}
\norm{C_zHC_z^{-1}g} &\leq \norm{C_zHP_{\alpha}M_{\chi_{B(0,r)}}C_z^{-1}g} + \norm{C_zHP_{\alpha}M_{\1-\chi_{B(0,r)}}C_z^{-1}g}\\
&\leq \norm{HP_{\alpha}}\norm{M_{\chi_{B(0,r)}}C_z^{-1}g} + \norm{HP_{\alpha}M_{\1-\chi_{B(0,r)}}}\norm{g}\\
&= \norm{HP_{\alpha}}\norm{M_{\chi_{B(z,r)}}g} + \norm{HP_{\alpha}M_{\1-\chi_{B(0,r)}}}\norm{g},
\end{align*}
where we used $C_zM_{\chi_{B(0,r)}}C_z^{-1} = M_{\chi_{B(z,r)}}$ and the fact that $C_z$ and $C_z^{-1}$ are both isometries. Since $\chi_{B(z,r)} \to 0$ pointwise as $z \to \partial\Omega$, the first term tends to $0$ by dominated convergence. Similarly, $M_{\1-\chi_{B(0,r)}}$ tends strongly to $0$ as $r \to \infty$. As $H$ is assumed to be compact, this implies $\norm{HP_{\alpha}M_{\1-\chi_{B(0,r)}}} \to 0$ as $r \to \infty$. Hence, choosing $r$ sufficiently large, we can assume that $\norm{HP_{\alpha}M_{\1-\chi_{B(0,r)}}}$ is arbitrarily small. We conclude that $\norm{C_zHC_z^{-1}g} \to 0$ as $z \to \partial\Omega$.
\end{proof}

Let $\beta\Omega$ denote the Stone-\v{C}ech compactification of $\Omega$. By its universal property, any continuous map $f$ from $\Omega$ to a compact Hausdorff space $K$ can be uniquely extended to a continuous map $f \from \beta\Omega \to K$. Here, we do not distinguish between $f$ and its extension to $\beta\Omega$. Note that $\beta\Omega$ can be realized as the maximal ideal space of bounded continuous functions defined on $\Omega$. Every maximal ideal corresponds to a point in $\beta\Omega$ via evaluation.

The following proposition will be proven after Remark~\ref{rem:compactifications} and Lemma~\ref{lem:lo_ex} below.

\begin{proposition} \label{prop2}
Let $f \in \Ac$ and $x \in \beta\Omega \setminus \Omega$. Then there is a bounded analytic function $h_x$ such that for all nets $(z_{\gamma})$ in $\Omega$ converging to $x$:
\begin{itemize}
	\item[\rm (i)] $\norm{f \circ \phi_{z_{\gamma}} - h_x}_{L^p_{\alpha}} \to 0$ as $z_{\gamma} \to x$,
	\item[\rm (ii)] $C_{z_{\gamma}}M_fC_{z_{\gamma}}^{-1} = M_{f \circ \phi_{z_{\gamma}}}$ converges strongly to $M_{h_x}$,
	\item[\rm (iii)] $C_{z_{\gamma}}M_{\bar{f}}C_{z_{\gamma}}^{-1} = M_{\bar{f} \circ \phi_{z_{\gamma}}}$ converges strongly to $M_{\overline{h_x}}$.
	\end{itemize}
\end{proposition}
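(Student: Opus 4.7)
The plan is to derive (i)--(iii) from Lemma~\ref{lem1} combined with a pointwise identification of $h_x$ via the universal property of $\beta\Omega$, so that the limit depends only on $x$ and not on the approximating net.

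First, since $C_z$ is an isometric automorphism of $L^p_\alpha$ that preserves $X^p_\alpha$, it commutes with $P_\alpha$, and therefore
\[
C_zH_fC_z^{-1} = (I-P_\alpha)\,C_zM_fC_z^{-1} = H_{f\circ\phi_z}.
\]
Applying Lemma~\ref{lem1} to the compact operator $H_f$ then yields $\|H_{f\circ\phi_{z_\gamma}}g\|_{L^p_\alpha} \to 0$ for every $g\in X^p_\alpha$ and every net $z_\gamma\to x\in\beta\Omega\setminus\Omega$. Specializing to $g=\1\in X^p_\alpha$ gives
\[
\|f\circ\phi_{z_\gamma} - P_\alpha(f\circ\phi_{z_\gamma})\|_{L^p_\alpha}\to 0.
\]

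Second, I would pin down $h_x$ pointwise via a Stone-\v{C}ech extension. For each fixed $w\in\Omega$, the scalar function $z\mapsto P_\alpha(f\circ\phi_z)(w) = \int (f\circ\phi_z)(u)\overline{K_w(u)}\,\mathrm{d}\mu_\alpha(u)$ is continuous on $\Omega$ (after a change of variables the $z$-dependence is absorbed into a smooth weight, permitting a dominated convergence argument even when $f$ is merely bounded measurable) and uniformly bounded in $z$ by $\|f\|_\infty\int|K_w|\,\mathrm{d}\mu_\alpha$. By the universal property of $\beta\Omega$, it extends continuously to $\beta\Omega$, and its value at $x$ defines $h_x(w)$ unambiguously and independently of the net. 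Furthermore, the family $\{P_\alpha(f\circ\phi_z)\}_{z\in\Omega}$ is uniformly bounded in $L^p_\alpha$ and therefore uniformly bounded on each compact subset of $\Omega$ via the reproducing-kernel estimate; Montel's theorem together with uniqueness of the pointwise limit then upgrades this to $P_\alpha(f\circ\phi_{z_\gamma})\to h_x$ uniformly on compact subsets of $\Omega$, so that $h_x$ is analytic.

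Combining the first two steps, $f\circ\phi_{z_\gamma}\to h_x$ in measure on $\Omega$. The uniform bound $|f\circ\phi_{z_\gamma}|\leq\|f\|_\infty$ forces $|h_x|\leq\|f\|_\infty$ almost everywhere and hence, by continuity of the analytic function $h_x$, everywhere; thus $h_x$ is bounded analytic. Since $\mathrm{d}\mu_{p\alpha/2}$ is a probability measure and $|f\circ\phi_{z_\gamma}-h_x|^p\leq (2\|f\|_\infty)^p$ is integrable, Vitali's convergence theorem upgrades in-measure convergence to convergence in $L^p_\alpha$, proving (i). For (ii), given any $g\in L^p_\alpha$, the product $(f\circ\phi_{z_\gamma}-h_x)g\to 0$ in measure and is dominated by $2\|f\|_\infty|g|\in L^p_\alpha$, so Vitali again yields $\|M_{f\circ\phi_{z_\gamma}}g-M_{h_x}g\|_{L^p_\alpha}\to 0$. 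Part (iii) follows identically with $\bar f$ in place of $f$ and notably requires only the convergence established in (i), not the compactness of $H_{\bar f}$.

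The principal obstacle is the uniqueness of $h_x$: the limit must depend solely on $x\in\beta\Omega$ and not on the particular net $(z_\gamma)$. This is resolved entirely by the Stone-\v{C}ech construction in the second step, which defines $h_x(w)$ as the unique continuous extension of $z\mapsto P_\alpha(f\circ\phi_z)(w)$ to $\beta\Omega$ evaluated at $x$. A secondary subtlety is verifying the continuity of this scalar function for merely bounded measurable $f$; this is handled by a change of variables in the defining integral that transfers the $z$-dependence off the symbol $f$ and onto a smooth kernel against which dominated convergence applies.
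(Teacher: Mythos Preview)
Your argument contains a genuine gap in the very first step. You assert that because $C_z$ is an isometric automorphism of $L^p_\alpha$ preserving $X^p_\alpha$, it must commute with $P_\alpha$, whence $C_zH_fC_z^{-1}=H_{f\circ\phi_z}$. This inference is valid in a Hilbert space for orthogonal projections, but $P_\alpha$ on $L^p_\alpha$ is merely a bounded (non-orthogonal) projection when $p\neq 2$, and the commutation fails for bounded symmetric domains in that range. The paper is explicit about this: see the remark preceding Lemma~\ref{lem:thm1} (``If $p = 2$ or $\Omega = \C^n$, a direct calculation shows $P_{\alpha}C_z^{-1} = C_z^{-1}P_{\alpha}$'') and the opening sentence of the proof of Proposition~\ref{prop4}. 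What survives from $P_\alpha C_z^{-1}P_\alpha=C_z^{-1}P_\alpha$ is only that $C_z^{-1}$ maps $X^p_\alpha$ into itself, which does \emph{not} force $C_zP_\alpha C_z^{-1}=P_\alpha$. Consequently, Lemma~\ref{lem1} applied to $g=\1$ gives you
\[
\bigl\|f\circ\phi_{z_\gamma}-C_{z_\gamma}T_fC_{z_\gamma}^{-1}\1\bigr\|_{L^p_\alpha}\to 0,
\]
not $\|f\circ\phi_{z_\gamma}-P_\alpha(f\circ\phi_{z_\gamma})\|_{L^p_\alpha}\to 0$. Your subsequent Stone--\v{C}ech construction therefore targets the wrong scalar function: the analytic approximant is $C_{z_\gamma}T_fC_{z_\gamma}^{-1}\1$, and it is the continuity of $z\mapsto C_zT_fC_z^{-1}g$ (not of $z\mapsto P_\alpha(f\circ\phi_z)$) that one must extend to $\beta\Omega$ in order to pin down $h_x$ independently of the net.

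This is exactly how the paper proceeds: it writes $C_zM_fC_z^{-1}=C_zT_fC_z^{-1}+C_zH_fC_z^{-1}$ on $X^p_\alpha$, kills the Hankel part with Lemma~\ref{lem1}, and invokes Lemma~\ref{lem:lo_ex} for the strong continuous extension of the Toeplitz part to $\beta\Omega$; then $h_x:=T_x\1$ is automatically analytic because $C_zT_fC_z^{-1}$ maps into the closed subspace $X^p_\alpha$. Your pointwise/Montel route is a legitimate alternative to citing Lemma~\ref{lem:lo_ex}, but it has to be carried out for $z\mapsto (C_zT_fC_z^{-1}\1)(w)$ rather than for $z\mapsto P_\alpha(f\circ\phi_z)(w)$. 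In the Fock case (and for $p=2$ on bounded symmetric domains) these two functions coincide and your argument is correct as written; in the remaining case it is not.
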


\begin{remark} \label{rem:compactifications}
In the literature two different compactifications are used to achieve more or less the same thing, namely the Stone-\v{C}ech compactification e.g.~in \cite{Hagger2,MiWi,Stroethoff} and the maximal ideal space of bounded uniformly continuous functions e.g.~in \cite{BaIs,Hagger1,MiSuWi}. Usually, this is just a matter of labeling limit operators. More precisely, if there are two compactifications of $\Omega$, say $\hat{\Omega}$ and $\tilde{\Omega}$, and a net $(z_{\gamma})$ in $\Omega$ converging to some $x \in \hat{\Omega}$, then by compactness there is a subnet, again denoted by $(z_{\gamma})$, such that $(z_{\gamma})$ also converges in $\tilde{\Omega}$. For an arbitrary operator $A$ the convergence of the corresponding net $(C_{z_{\gamma}}AC_{z_{\gamma}}^{-1})$, by definition, does not depend on the chosen compactification. The set of all limits of nets of the form $(C_{z_{\gamma}}AC_{z_{\gamma}}^{-1})$ is therefore the same for either compactification, namely exactly the closure of $\set
 {z \in \Omega : C_zAC_z^{-1}}$ in the strong operator topology. In fact, since bounded sets are metrizable in the strong operator topology, one may even take sequences instead of nets. However, it is convenient to label the limits in terms of boundary elements of the compactification. For this to make sense, for every net $(z_{\gamma})$ converging to the same $x \in \hat{\Omega} \setminus \Omega$, the limit of $(C_{z_{\gamma}}AC_{z_{\gamma}}^{-1})$ needs to be the same. For the Stone-\v{C}ech compactification this is rather easy to show. One only needs to show that $z \mapsto C_zAC_z^{-1}$ is weakly continuous as it may then be continuously extended to $\beta\Omega$, which implies the uniqueness.
\end{remark}

We will need the following lemma, which is a corollary of \cite[Proposition 14]{Hagger2} and \cite[Proposition 5.3]{BaIs}:

\begin{lemma} \label{lem:lo_ex}
Let $f \in L^{\infty}(\Omega)$ and $g \in X^p_{\alpha}$. Then the map $z \mapsto C_zT_fC_z^{-1}g$ extends continuously to $\beta\Omega$.
\end{lemma}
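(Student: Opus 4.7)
The plan is to leverage the universal property of $\beta\Omega$: since $L^p_\alpha$ is reflexive for $1<p<\infty$, the norm-closed ball
\[
K \;:=\; \set{u \in X^p_\alpha : \norm{u} \leq \norm{P_\alpha}\norm{f}_\infty\norm{g}}
\]
is weakly compact and Hausdorff, so any weakly continuous map $\Omega \to K$ extends uniquely to a continuous map $\beta\Omega \to K$. I therefore need to verify two things: (a) the map $z \mapsto C_zT_fC_z^{-1}g$ takes values in $K$, and (b) it is weakly continuous on $\Omega$.

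For (a), combine the stated intertwining $C_zM_fC_z^{-1} = M_{f\circ\phi_z}$ with the identity $C_zP_\alpha C_z^{-1} = P_\alpha$. The latter holds because $C_z$ is a surjective isometry on $L^p_\alpha$ mapping $X^p_\alpha$ onto itself; on $L^2_\alpha$ the identity is immediate from $P_\alpha$ being the orthogonal projection, and it extends to $L^p_\alpha$ by density and the boundedness of both operators. Consequently
\[
C_zT_fC_z^{-1}g \;=\; P_\alpha M_{f\circ\phi_z}g \;=\; T_{f\circ\phi_z}g,
\]
and the uniform norm bound $\norm{T_{f\circ\phi_z}g} \leq \norm{P_\alpha}\norm{f}_\infty\norm{g}$ is immediate.

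For (b), fix $h$ in the dual of $L^p_\alpha$. Using the self-adjointness of $P_\alpha$ for the $L^2$-pairing,
\[
\scpr{T_{f\circ\phi_z}g}{h} \;=\; \int_\Omega (f\circ\phi_z)\,G\,\mathrm{d}\mu_\alpha,\qquad G := g\,\overline{P_\alpha h},
\]
and H\"older's inequality with $g \in L^p_\alpha$ and $P_\alpha h \in L^{p'}_\alpha$ gives $G \in L^1(\mathrm{d}\mu_\alpha)$. The change of variables $w = \phi_z(u)$ reduces the continuity of this scalar integral in $z$ to $L^1(\mathrm{d}v)$-continuity of $z \mapsto (G\circ\phi_z)\,J_z$, where $J_z$ absorbs the Jacobian of $\phi_z$ and the density of $\mu_\alpha$ relative to Lebesgue. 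In the Fock case $\phi_z$ is translation by $z$ and $J_z$ is a smooth Gaussian factor, so this reduces to the classical strong continuity of translation on $L^1$. In the Bergman case the smooth dependence of both $\phi_z$ and $J_z$ on $z$, combined with a density approximation of $G$ by continuous compactly supported functions, yields the same conclusion.

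The main obstacle is step (b), specifically the $L^1$-continuity of $z \mapsto (G\circ\phi_z)\,J_z$ when $G$ has no regularity beyond being integrable. The Fock case is classical, but the bounded-symmetric-domain case takes some book-keeping because of the $h(w,w)^\alpha$ weight and the explicit form of the geodesic symmetries in the Harish-Chandra realization; the key is uniform control of $J_z$ on compact neighborhoods of any fixed basepoint, which makes the approximation argument routine. Once (b) is in hand, the universal property of $\beta\Omega$ closes the proof.
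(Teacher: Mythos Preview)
There are two genuine gaps in your argument.

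First, the identity $C_zP_\alpha C_z^{-1} = P_\alpha$ fails for bounded symmetric domains when $p \neq 2$. Your density argument does not go through because the operator $C_z$ on $L^p_\alpha$ is \emph{not} the restriction of $C_z$ on $L^2_\alpha$: the multiplier $h(z,z)^{(\alpha+g)/p}/h(w,z)^{2(\alpha+g)/p}$ in the definition of $C_z$ depends explicitly on $p$, so there is no common dense subspace on which the two operators agree. The paper flags this non-commutation twice (see the opening sentence of the proof of Proposition~\ref{prop4} and the remark preceding Lemma~\ref{lem:thm1}); indeed, it is the source of most of the technical work in the paper. Consequently the simplification $C_zT_fC_z^{-1}g = T_{f\circ\phi_z}g$, and with it the integral representation underpinning step~(b), is unavailable in the Bergman case for $p\neq 2$.

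Second, even where your reduction is valid (the Fock case, or $p=2$), the Stone--\v{C}ech extension you invoke only produces a \emph{weakly} continuous map $\beta\Omega \to K$, because you have equipped $K$ with the weak topology in order to make it compact. The lemma, however, is used for norm convergence of $C_{z_\gamma}T_fC_{z_\gamma}^{-1}g$ along nets $z_\gamma \to x \in \beta\Omega\setminus\Omega$ (see the proofs of Proposition~\ref{prop2} and Theorem~\ref{thm1}, where this norm convergence feeds directly into $\norm{f\circ\phi_{z_\gamma}-h_x}_{L^p_\alpha}\to 0$). The paper's own proof confronts exactly this issue: after obtaining the weak extension essentially by your route, it upgrades to strong continuity by invoking that strong limits are already known along subnets in a different compactification (\cite{BaIs}, \cite{Hagger2}) and then appealing to uniqueness of weak limits. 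Without an analogous step your argument stops at weak continuity, which is not enough.
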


\begin{proof}
For bounded symmetric domains this is shown in \cite[Proposition 14]{Hagger2}. In the case $\Omega = \C^n$, \cite[Proposition 5.3]{BaIs} proves the result for the maximal ideal space of bounded uniformly continuous functions instead of $\beta\C^n$. Hence the result is obtained via Remark \ref{rem:compactifications}. To illustrate the argument, we provide some more details here. Let $\tilde{\Omega}$ denote the maximal ideal space of bounded uniformly continuous functions on $\C^n$. In \cite[Proposition 5.3]{BaIs} it is shown that $z \mapsto C_zT_fC_z^{-1}$ is strongly continuous on $\Omega$. In particular, it is weakly continuous. Moreover, it is bounded by $\norm{P_{\alpha}}\norm{f}_{\infty}$. As bounded sets are relatively compact in the weak operator topology, $z \mapsto C_zT_fC_z^{-1}$ can be extended to a weakly continuous map on $\beta\Omega$. It remains to show that $z \mapsto C_zT_fC_z^{-1}$ is also strongly continuous on $\beta\Omega$. Indeed, choose a net $(
 z_{\gamma})$ in $\Omega$ that converges to some $x \in \beta\Omega$. As $\tilde{\Omega}$ is compact, every subnet of $(z_{\gamma})$ has a subnet, again denoted by $(z_{\gamma})$ converging in $\tilde{\Omega}$. For each of these subnets the corresponding subnet $(C_{z_{\gamma}}T_fC_{z_{\gamma}}^{-1})$ converges strongly by \cite[Proposition 5.3]{BaIs}. The weak continuity implies that all these limits are the same. Hence the whole net converges strongly and thus $z \mapsto C_zT_fC_z^{-1}$ is strongly continuous on $\beta\Omega$.
\end{proof}

\begin{proof}[Proof of Proposition \ref{prop2}]
For $g \in X^p_{\alpha}$, we have
\begin{equation}
C_zM_fC_z^{-1}g = C_zT_fC_z^{-1}g + C_zH_fC_z^{-1}g. \label{prop2_eq}
\end{equation}
Let $x \in \beta\Omega \setminus \Omega$ and choose a net $(z_{\gamma})$ in $\Omega$ that converges to $x$. By Lemma \ref{lem1} and Lemma \ref{lem:lo_ex}, we get that $C_{z_{\gamma}}M_fC_{z_{\gamma}}^{-1} = M_{f \circ \phi_{z_{\gamma}}}$ converges strongly on $X^p_{\alpha}$ to some operator $T_x$ as $z_{\gamma} \to x$. Note that $T_x$ only depends on $x$ and not on the chosen net $(z_{\gamma})$ converging to $x$. If we define $h_x := T_x\1$, then
\begin{equation*}
\norm{f \circ \phi_{z_{\gamma}} - h_x}_{L^p_{\alpha}} = \norm{M_{f \circ \phi_{z_{\gamma}}}\1 - T_x\1}_{L^p_{\alpha}} \to 0.
\end{equation*}
As the functions $f \circ \phi_{z_{\gamma}}$ are uniformly bounded, $h_x$ is also bounded. It follows that $M_{f \circ \phi_{z_{\gamma}}} \to M_{h_x}$ and $M_{\bar{f} \circ \phi_{z_{\gamma}}} \to M_{\overline{h_x}}$ strongly on $L^p_{\alpha}$. In particular, $T_x = M_{h_x}$. By \eqref{prop2_eq} and Lemma \ref{lem1}, this implies $C_{z_{\gamma}}T_fC_{z_{\gamma}}^{-1} \to M_{h_x}$ strongly on $X^p_{\alpha}$. As $X^p_{\alpha}$ is closed, $h_x$ has to be a bounded analytic function. 
\end{proof}

\begin{definition}[{\cite[Definition 6]{FuHa}, \cite[Definition 9]{Hagger1}}]
A bounded operator $A \in \Lc(L^p_{\alpha})$ is called a band operator if there exists a positive real number $\omega$ such that $M_fAM_g = 0$ for all $f,g \in L^{\infty}(\Omega)$ with $\dist_{d_{\Omega}}(\supp f,\supp g) > \omega$. Here, $\dist_{d_{\Omega}}(M,N)$ denotes the distance between two sets $M,N \subseteq \Omega$ and is defined as
\[\dist_{d_{\Omega}}(M,N) := \inf_{x \in M,y \in N} d_{\Omega}(x,y).\]
An operator $A \in \Lc(L^p_{\alpha})$ is called band-dominated if it is the norm limit of a sequence of band operators.
\end{definition}

With a slight abuse of notation, we will call an operator $A \from X^p_{\alpha} \to L^p_{\alpha}$ or $A \from X^p_{\alpha} \to X^p_{\alpha}$ band-dominated if $AP_{\alpha} \from L^p_{\alpha} \to L^p_{\alpha}$ is band-dominated. As $P_{\alpha}$ is itself band-dominated (see \cite[Proof of Theorem 7]{FuHa}, \cite[Proof of Theorem 15]{Hagger1}) and every product of band-dominated operators is again a band-dominated operator (see \cite[Proposition 13]{FuHa}, \cite[Proposition 13]{Hagger1}), all  Toeplitz and Hankel operators with bounded symbols are also band-dominated.

\begin{proposition} \label{prop4}
Let $H \from X^p_{\alpha} \to L^p_{\alpha}$ be a band-dominated operator and suppose that $C_zHC_z^{-1} \to 0$ strongly as $z \to \partial\Omega$. Then $H$ is compact.
\end{proposition}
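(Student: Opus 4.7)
The plan is to establish compactness of $\tilde H := HP_\alpha\from L^p_\alpha\to L^p_\alpha$, which is equivalent to $H$ being compact. A standing ingredient is the invariance $C_zP_\alpha = P_\alpha C_z$, which holds because $C_z$ preserves analyticity; this upgrades the hypothesis to $L^p_\alpha$ via $C_z\tilde HC_z^{-1} = (C_zHC_z^{-1})P_\alpha$, so that $C_z\tilde HC_z^{-1}\to 0$ strongly on $L^p_\alpha$.

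The heart of the argument is a local decay estimate: for each fixed $r>0$,
\begin{equation*}
\|\tilde HM_{\chi_{B(z,r)}}\|\to 0\quad\text{as }z\to\partial\Omega.
\end{equation*}
To see this, the identity $\chi_{B(0,r)}\circ\phi_z = \chi_{B(z,r)}$ gives $M_{\chi_{B(z,r)}} = C_zM_{\chi_{B(0,r)}}C_z^{-1}$, and combining with the invariance of $P_\alpha$ allows one to rewrite
\begin{equation*}
\|\tilde HM_{\chi_{B(z,r)}}\| = \|(C_z^{-1}HC_z)(P_\alpha M_{\chi_{B(0,r)}})\|.
\end{equation*}
The operator $P_\alpha M_{\chi_{B(0,r)}}\from L^p_\alpha\to X^p_\alpha$ is compact (the reproducing kernel is smooth on the bounded integration region), and $C_z^{-1}HC_z\to 0$ strongly as $z\to\partial\Omega$ (in the Fock case via $C_z^{-1} = C_{-z}$ with $-z\to\infty$, in the Bergman case via $C_z^{-1} = C_z$). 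Since strong convergence is uniform on the relatively compact image of the unit ball under a compact operator, the composition converges to $0$ in operator norm.

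To promote this local decay to $\|\tilde HM_{\chi_{B(0,R)^c}}\|\to 0$ as $R\to\infty$, I would invoke the band-dominated hypothesis. Given $\epsilon>0$, choose a band operator $B$ with bandwidth $\omega$ satisfying $\|\tilde H-B\|<\epsilon$, cover $B(0,R)^c$ by balls $B(z_k,1)$, $z_k\in B(0,R)^c$, of uniformly bounded multiplicity $N$ (by the doubling property of $d_\Omega$), and take a measurable partition $E_k\subset B(z_k,1)$ of $B(0,R)^c$. For $g$ supported in $B(0,R)^c$ write $g = \sum_k M_{\chi_{E_k}}g$; since $B$ has bandwidth $\omega$, each $BM_{\chi_{E_k}}g$ is supported in $B(z_k,1+\omega)$, so bounded multiplicity together with a H\"older-type pointwise bound yields
\begin{equation*}
\|Bg\|^p \leq N^{p-1}\sum_k\|BM_{\chi_{B(z_k,1)}}\|^p\|M_{\chi_{E_k}}g\|^p.
\end{equation*}
Using $\|BM_{\chi_{B(z_k,1)}}\| \leq \|\tilde HM_{\chi_{B(z_k,1)}}\| + \epsilon$, the local estimate (which makes the first term uniformly small once $R$ is large), and $\sum_k\|M_{\chi_{E_k}}g\|^p = \|g\|^p$, gives $\|\tilde HM_{\chi_{B(0,R)^c}}\|\leq C\epsilon$ for $R$ large, where $C$ depends only on $p$ and $N$. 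Since $\epsilon$ is arbitrary, this tail vanishes.

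Writing $\tilde H = \tilde HM_{\chi_{B(0,R)}} + \tilde HM_{\chi_{B(0,R)^c}}$ then exhibits $\tilde H$ as a norm limit of the compact operators $HP_\alpha M_{\chi_{B(0,R)}}$, completing the proof. The step I expect to be most delicate is the aggregation: controlling cross-interactions between the pieces $BM_{\chi_{E_k}}g$ requires the finite bandwidth $\omega$, which is precisely why the band-dominated hypothesis cannot be dropped.
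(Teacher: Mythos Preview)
Your argument contains a genuine gap in the Bergman space case when $p\neq 2$. You assert that $C_zP_\alpha = P_\alpha C_z$ ``holds because $C_z$ preserves analyticity,'' but preserving analyticity only yields $P_\alpha C_z P_\alpha = C_z P_\alpha$ (i.e., $C_z$ maps $X^p_\alpha$ into itself); it does \emph{not} give full commutation, which would additionally require $C_z$ to preserve the complement $(I-P_\alpha)L^p_\alpha$. In the Bergman setting $C_z$ carries the $p$-dependent weight $h(w,z)^{-2(\alpha+g)/p}$, and for $p\neq 2$ it fails to commute with $P_\alpha$. The paper singles this out as ``the main difficulty'' and works around it by introducing the dual isometry $C_z^{\star}$ (the adjoint of the reflection on $L^q_\alpha$), which satisfies $P_\alpha C_z^{\star}P_\alpha = P_\alpha C_z^{\star}$ and $C_z^{\star}M_fC_z^{\star} = M_{f\circ\phi_z}$, together with the strong convergence of $C_{z_\gamma}P_\alpha C_{z_\gamma}^{\star}$ on $X^p_\alpha$. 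This is exactly what lets one pass from $\|HP_\alpha M_{\chi_{B(z,r)}}\|$ to a quantity controlled by $C_zHC_z$ without assuming commutation. Your identity $\|\tilde H M_{\chi_{B(z,r)}}\| = \|(C_z^{-1}HC_z)(P_\alpha M_{\chi_{B(0,r)}})\|$ breaks down precisely here.

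For $\Omega=\C^n$ (and for $p=2$ on bounded symmetric domains) the commutation is valid, and your outline is then essentially sound and close in spirit to the paper's: both pivot on the compactness of $P_\alpha M_{\chi_{B(0,r)}}$ and a local-to-global step exploiting band-dominatedness. The paper invokes \cite[Proposition~21]{Hagger2} as a black box for the latter, whereas you sketch a direct covering/partition argument. One small point there: your pointwise H\"older bound requires bounded overlap of the \emph{enlarged} balls $B(z_k,1+\omega)$, not just of $B(z_k,1)$; this follows from the same doubling property but should be stated explicitly.
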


\begin{proof}
The main difficulty is that for bounded symmetric domains the operators $P_{\alpha}$ and $C_z$ do not commute unless $p = 2$. We therefore focus on the case of bounded symmetric domains. Recall that $C_z^{-1} = C_z$ here.

Let $\frac{1}{p} + \frac{1}{q} = 1$ and let $\tilde{C}_z \from L^q_{\alpha} \to L^q_{\alpha}$ be the reflection operator on $L^q_{\alpha}$ corresponding to $z \in \Omega$. We define $C_z^{\star} \from L^p_{\alpha} \to L^p_{\alpha}$ by
\begin{equation*}
\langle C_z^{\star}f,g \rangle_{L^2_{\alpha}} = \langle f,\tilde{C}_zg\rangle_{L^2_{\alpha}} \quad \text{for all } f \in L^p_{\alpha},\, g \in L^q_{\alpha},
\end{equation*}
that is, the adjoint of $\tilde{C}_z$. Let $(z_{\gamma})$ be a net in $\Omega$ that converges to some $x \in \beta\Omega$. The product $C_{z_{\gamma}}P_{\alpha}C_{z_{\gamma}}^{\star}$ is strongly convergent on $X^p_{\alpha}$, see \cite[Proposition 17]{Hagger2}. As $\tilde{C}_z$ is a surjective isometry, $C_z^{\star}$ is again a (surjective) isometry. Moreover, since $C_zM_fC_z = M_{f \circ \phi_z}$ and $P_{\alpha}C_zP_{\alpha} = C_zP_{\alpha}$, we have $C_z^{\star}M_fC_z^{\star} = M_{f \circ \phi_z}$ and $P_{\alpha}C_z^{\star}P_{\alpha} = P_{\alpha}C_z^{\star}$.

Now assume that $H$ is not compact. Then, since $P_{\alpha}M_{\chi_{B(0,s)}}$ is compact for every $s > 0$ (see e.g.~\cite[Proposition 15]{Hagger1}), there is an $\epsilon > 0$ such that for all $s > 0$ we have $\norm{HP_{\alpha}M_{1-\chi_{B(0,s)}}} > \epsilon$. By \cite[Proposition 21]{Hagger2}, there is a radius $r$ such that for all $s > 0$ there is a midpoint $z_s$ such that
\begin{equation} \label{eq:prop_4}
\norm{HP_{\alpha}M_{\chi_{B(z_s,r)}}} \geq \norm{HP_{\alpha}M_{1-\chi_{B(0,s)}}M_{\chi_{B(z_s,r)}}} > \frac{\epsilon}{2}.
\end{equation}
Clearly, $z_s \to \partial\Omega$ as $s \to \infty$. Therefore $(z_s)$ has a subnet, denoted by $(z_{s_\gamma})$, that converges to some $x \in \beta\Omega \setminus \Omega$. It follows
\begin{align*}
\norm{(C_{z_{s_\gamma}}HC_{z_{s_\gamma}})(C_{z_{s_\gamma}}P_{\alpha}C_{z_{s_\gamma}}^{\star})P_{\alpha}M_{\chi_{B(0,r)}}} &= \norm{HP_{\alpha}C_{z_{s_\gamma}}^{\star}M_{\chi_{B(0,r)}}C_{z_{s_\gamma}}^{\star}}\\
&= \norm{HP_{\alpha}M_{\chi_{B(z_{s_\gamma},r)}}}.
\end{align*}
As $z_{s_\gamma} \to x$, the left-hand side tends to $0$, a contradiction to \eqref{eq:prop_4}. This completes the proof for bounded symmetric domains. The second part of the proof together with $C_zP_{\alpha} = P_{\alpha}C_z$ and the obvious modifications yields a (much simpler) proof for the Fock space.
\end{proof}

We need one more preliminary lemma for our main results.

\begin{lemma} \label{lem:thm1}
Let $g \in L^p_{\alpha}$ and $z \in \Omega$. Then
\begin{equation*}
\norm{(C_z^{-1}P_{\alpha} - P_{\alpha}C_z^{-1})g}_{L^p_{\alpha}} \leq \norm{P_{\alpha}}\norm{(I-P_{\alpha})g}_{L^p_{\alpha}}.
\end{equation*}
\end{lemma}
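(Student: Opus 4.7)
The plan is to reduce the lemma to a single intertwining identity between $P_{\alpha}$ and $C_z^{-1}$ that holds uniformly in the Fock and Bergman settings, and then finish with a short algebraic manipulation. In the Fock case one has the full commutation $P_{\alpha}C_z = C_zP_{\alpha}$ (checked by a direct substitution in the integral defining $P_{\alpha}$), which gives $P_{\alpha}C_z^{-1}P_{\alpha} = C_z^{-1}P_{\alpha}$ immediately. In the Bergman case of a bounded symmetric domain, the preliminaries only give the one-sided relation $P_{\alpha}C_zP_{\alpha} = C_zP_{\alpha}$ recalled in the proof of Proposition~\ref{prop4}, but combined with $C_z^{-1}=C_z$ this still yields
\[
P_{\alpha}C_z^{-1}P_{\alpha} = C_z^{-1}P_{\alpha}.
\]
Conceptually, this identity just expresses that $C_z^{-1}$ maps the analytic subspace $X^p_{\alpha}$ into itself.

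Given this identity, I would substitute it into the commutator and rewrite
\[
C_z^{-1}P_{\alpha} - P_{\alpha}C_z^{-1} \;=\; P_{\alpha}C_z^{-1}P_{\alpha} - P_{\alpha}C_z^{-1} \;=\; -P_{\alpha}C_z^{-1}(I-P_{\alpha}).
\]
Applying both sides to $g$ and using that $C_z^{-1}$ is a surjective isometry of $L^p_{\alpha}$ (and therefore has operator norm one), taking $L^p_{\alpha}$-norms gives
\[
\norm{(C_z^{-1}P_{\alpha} - P_{\alpha}C_z^{-1})g}_{L^p_{\alpha}} \;=\; \norm{P_{\alpha}C_z^{-1}(I-P_{\alpha})g}_{L^p_{\alpha}} \;\leq\; \norm{P_{\alpha}}\,\norm{(I-P_{\alpha})g}_{L^p_{\alpha}},
\]
which is the asserted inequality.

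I do not expect a real obstacle. The only mildly delicate point is to remember that in the Bergman case with $p \neq 2$ one does \emph{not} have the two-sided commutation $P_{\alpha}C_z = C_zP_{\alpha}$ available, so the argument must proceed via the one-sided relation $P_{\alpha}C_zP_{\alpha} = C_zP_{\alpha}$ together with the involutivity $C_z^{-1}=C_z$, rather than through a literal commutation. The lemma is then the clean quantitative statement that although the commutator $[C_z^{-1},P_{\alpha}]$ need not vanish, it is controlled in operator norm by $\norm{P_{\alpha}}$ times the ``distance'' from $g$ to $X^p_{\alpha}$ measured by $(I-P_{\alpha})g$ — a direct manifestation of the fact that $C_z^{-1}$ preserves analyticity.
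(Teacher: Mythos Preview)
Your proposal is correct and follows essentially the same route as the paper: both arguments use the identity $P_{\alpha}C_z^{-1}P_{\alpha}=C_z^{-1}P_{\alpha}$ to rewrite the commutator as $-P_{\alpha}C_z^{-1}(I-P_{\alpha})$ and then apply the operator norm of $P_{\alpha}$ together with the isometry of $C_z^{-1}$. Your discussion of why the one-sided relation suffices in the Bergman case (via $C_z^{-1}=C_z$) and why the Fock case is trivial matches the paper's own remark preceding the lemma.
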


If $p = 2$ or $\Omega = \C^n$, a direct calculation shows $P_{\alpha}C_z^{-1} = C_z^{-1}P_{\alpha}$. This lemma is therefore only relevant in case $\Omega$ is a bounded symmetric domain and $p \neq 2$. Recall that $C_z^{-1} = C_z$ in that case.

\begin{proof}
Using $P_\alpha C_z^{-1} P_\alpha = C_z^{-1} P_\alpha$ and the fact that $C_z^{-1}$ is an isometry, we have
\begin{align*}
\norm{(C_z^{-1}P_{\alpha} - P_{\alpha}C_z^{-1})g}_{L^p_{\alpha}} &\leq \norm{P_{\alpha}}\norm{(C_z^{-1}P_{\alpha} - C_z^{-1})g}_{L^p_{\alpha}}\\
&= \norm{P_{\alpha}}\norm{(I-P_{\alpha})g}_{L^p_{\alpha}}.\qedhere
\end{align*}
\end{proof}

\section{Main results}

Using the preliminary results above, we can prove the following characterization of compact Hankel operators on both Bergman and Fock spaces. Part (iii) can be further used to show that compactness is independent of $p$ and $\alpha$, see Corollary \ref{cor:thm1_2}.

\begin{theorem} \label{thm1}
Let $f \in L^{\infty}(\Omega)$ and $1<p<\infty$. The following are equivalent:
\begin{itemize}
	\item[\rm (i)] $H_f : X^p_\alpha(\Omega)\to L^p_\alpha$ is compact.
	\item[\rm (ii)] For every $g \in X^p_{\alpha}$, we have $\norm{C_zH_fC_z^{-1}g}_{L^p_{\alpha}} \to 0$ as $z \to \partial\Omega$.
	\item[\rm (iii)] For every $x \in \beta\Omega \setminus \Omega$ there is a bounded analytic function $h_x$ such that for all nets $(z_{\gamma})$ in $\Omega$ converging to $x$ we have
	\begin{equation*}
	\norm{f \circ \phi_{z_{\gamma}} - h_x}_{L^p_{\alpha}} \to 0 \quad \mbox{as} \quad z_{\gamma} \to x.
	\end{equation*}
	\item[\rm (iv)] $\norm{(I-P_{\alpha})(f \circ \phi_z)}_{L^p_{\alpha}} \to 0$ as $z \to \partial\Omega$.
	\item[\rm (v)] $\norm{H_fg_z}_{L^p_{\alpha}} \to 0$ as $z \to \partial\Omega$, where $g_z = C_z^{-1}\1$, that is, $g_z = k_{-z}$ in case $\Omega = \C^n$ and $g_z = k_z^{2/p}$ in case $\Omega$ is a bounded symmetric domain.
	\item[\rm (vi)] For every $g \in X^p_{\alpha}$ we have $\norm{H_{f \circ \phi_z}g}_{L^p_{\alpha}} \to 0$ as $z \to \partial\Omega$.
\end{itemize}
\end{theorem}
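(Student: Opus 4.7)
The plan is to establish the six-way equivalence by a cycle of implications: $(i)$--$(iii)$ together with $(vi)$ fit together cleanly via the lemmas already stated, while $(iv)$ and $(v)$ are handled by additional applications of Lemma \ref{lem:thm1}, which is designed precisely to control the non-commutativity of $C_z^{\pm 1}$ and $P_\alpha$ in the Bergman $p \ne 2$ setting.

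The equivalence $(i) \Leftrightarrow (ii)$ is immediate: Lemma \ref{lem1} gives $(i)\Rightarrow(ii)$ with $H = H_f$, while $H_f$ is band-dominated, so Proposition \ref{prop4} yields $(ii)\Rightarrow(i)$. Next, $(i)\Rightarrow(iii)$ is Proposition \ref{prop2}. From $(iii)$ I derive both $(iv)$ and $(vi)$ as follows: along any net $z_\gamma \to x \in \beta\Omega\setminus\Omega$, Proposition \ref{prop2}(i) gives $\norm{f\circ\phi_{z_\gamma} - h_x}_{L^p_\alpha} \to 0$ and Proposition \ref{prop2}(ii) yields $(f\circ\phi_{z_\gamma})g \to h_x g$ in $L^p_\alpha$; since $h_x$ is bounded and analytic, both $h_x$ and $h_x g$ lie in $X^p_\alpha$ and are annihilated by $I-P_\alpha$. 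A subnet argument based on compactness of $\beta\Omega$ upgrades these to statements as $z \to \partial\Omega$, giving $(iii)\Rightarrow(iv)$ and $(iii)\Rightarrow(vi)$. Finally, $(vi)\Rightarrow(ii)$ comes from the algebraic identity
\[
C_z H_f C_z^{-1} g - H_{f\circ\phi_z} g = (P_\alpha - C_z P_\alpha C_z^{-1})[(f\circ\phi_z)g] = C_z(C_z^{-1}P_\alpha - P_\alpha C_z^{-1})[(f\circ\phi_z)g],
\]
whose right-hand side has norm at most $\norm{P_\alpha}\norm{H_{f\circ\phi_z}g}$ by Lemma \ref{lem:thm1} applied to $(f\circ\phi_z)g$; hence $\norm{C_z H_f C_z^{-1}g} \leq (1+\norm{P_\alpha})\norm{H_{f\circ\phi_z}g}$, which tends to $0$ by $(vi)$.

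For $(iv) \Leftrightarrow (v)$, the identity $fg_z = M_f C_z^{-1}\1 = C_z^{-1}(f\circ\phi_z)$ gives $H_f g_z = (I-P_\alpha)C_z^{-1}(f\circ\phi_z)$. Splitting $(I-P_\alpha)C_z^{-1} = C_z^{-1}(I-P_\alpha) + (C_z^{-1}P_\alpha - P_\alpha C_z^{-1})$ and invoking Lemma \ref{lem:thm1} yields the one-sided bound $\norm{H_f g_z} \leq (1+\norm{P_\alpha})\norm{(I-P_\alpha)(f\circ\phi_z)}$. Reversing the roles by writing $(I-P_\alpha)(f\circ\phi_z) = (I-P_\alpha)C_z[C_z^{-1}(f\circ\phi_z)]$ and repeating the analogous decomposition gives the matching lower bound $\norm{(I-P_\alpha)(f\circ\phi_z)} \leq (1+\norm{P_\alpha})\norm{H_f g_z}$, so the two quantities vanish together.

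To close the cycle I prove $(iv)\Rightarrow(iii)$. Fix a net $z_\gamma \to x$. By Lemma \ref{lem:lo_ex} applied with $g = \1$, the quantity $C_z T_f C_z^{-1}\1 = C_z P_\alpha C_z^{-1}(f\circ\phi_z)$ converges in norm to some $h_x \in X^p_\alpha$. The identity $P_\alpha(f\circ\phi_z) - C_z P_\alpha C_z^{-1}(f\circ\phi_z) = C_z(C_z^{-1}P_\alpha - P_\alpha C_z^{-1})(f\circ\phi_z)$ together with Lemma \ref{lem:thm1} shows this difference has norm at most $\norm{P_\alpha}\norm{(I-P_\alpha)(f\circ\phi_z)}$, which vanishes by $(iv)$; combined with $(iv)$ itself this forces $f\circ\phi_{z_\gamma} = P_\alpha(f\circ\phi_{z_\gamma}) + (I-P_\alpha)(f\circ\phi_{z_\gamma}) \to h_x$ in $L^p_\alpha$. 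Boundedness of $h_x$ by $\norm{f}_\infty$ then follows by extracting an a.e.\ convergent subsequence from the uniformly bounded family $\{f\circ\phi_{z_\gamma}\}$. The main obstacle throughout is precisely this non-commutativity of $C_z^{\pm 1}$ with $P_\alpha$ in the Bergman $p \ne 2$ case, which Lemma \ref{lem:thm1} is tailored to handle; in the Fock case or for $p=2$ one has $C_z^{-1}P_\alpha = P_\alpha C_z^{-1}$ and all the above estimates become exact equalities, so the proof simplifies considerably.
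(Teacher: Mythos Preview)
Your proof is correct and uses the same ingredients as the paper (Lemmas \ref{lem1}, \ref{lem:lo_ex}, \ref{lem:thm1} and Propositions \ref{prop2}, \ref{prop4}), differing only in the order of the implication cycle: the paper runs the linear chain (i)$\Rightarrow$(iii)$\Rightarrow$(iv)$\Rightarrow$(v)$\Rightarrow$(vi)$\Rightarrow$(ii), whereas you branch via (iii)$\Rightarrow$(vi) and add the extra implications (v)$\Rightarrow$(iv) and (iv)$\Rightarrow$(iii). One small point: in your (iii)$\Rightarrow$(vi) step you invoke ``Proposition~\ref{prop2}(ii)'', but that proposition has hypothesis $f\in\Ac$, i.e.\ (i), not (iii); what you actually need---that $L^p_\alpha$-convergence of the uniformly bounded symbols $f\circ\phi_{z_\gamma}$ to $h_x$ forces $M_{f\circ\phi_{z_\gamma}}\to M_{h_x}$ strongly---is exactly the argument given \emph{inside} the proof of Proposition~\ref{prop2}, so the mathematics is fine even if the citation is slightly off.
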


\begin{proof}
The equivalence of (i) and (ii) follows from Lemma \ref{lem1} and Proposition \ref{prop4}. By Proposition \ref{prop2}, (i) implies (iii). As $P_{\alpha}h_x = h_x$ and $P_{\alpha}$ is bounded, (iv) follows immediately from (iii).

Now assume (iv). Then
\begin{align*}
\|H_fC_z^{-1}\1\|_{L^p_{\alpha}} &= \|(I-P_{\alpha})M_fC_z^{-1}\1\|_{L^p_{\alpha}}\\
&= \|(I-P_{\alpha})C_z^{-1}M_{f \circ \phi_z}\1\|_{L^p_{\alpha}}\\
&= \|(I-P_{\alpha})C_z^{-1}(f \circ \phi_z)\|_{L^p_{\alpha}}\\
&\leq \|C_z^{-1}(I-P_{\alpha})(f \circ \phi_z)\|_{L^p_{\alpha}} + \|(C_z^{-1}P_{\alpha}-P_{\alpha}C_z^{-1})(f \circ \phi_z)\|_{L^p_{\alpha}}\\
&\leq \|(I-P_{\alpha})(f \circ \phi_z)\|_{L^p_{\alpha}} + \|P_{\alpha}\|\|(I-P_{\alpha})(f \circ \phi_z)\|_{L^p_{\alpha}}\\
&\to 0
\end{align*}
as $z \to \partial\Omega$ by Lemma \ref{lem:thm1}, hence (v).

So now assume (v). Let $(z_{\gamma})$ be a net converging to some $x \in \beta\Omega \setminus \Omega$. By Lemma \ref{lem:lo_ex}, the operator $C_{z_{\gamma}}T_fC_{z_{\gamma}}^{-1}$ converges strongly to some operator $T_x$. Set $h_x := T_x\1 \in X^p_{\alpha}$. Then
\begin{align*}
\norm{f \circ \phi_{z_{\gamma}} - h_x}_{L^p_{\alpha}} &\leq \norm{f \circ \phi_z - C_{z_{\gamma}}T_fC_{z_{\gamma}}^{-1}\1}_{L^p_{\alpha}} + \norm{C_{z_{\gamma}}T_fC_{z_{\gamma}}^{-1}\1 - h_x}_{L^p_{\alpha}}\\
&= \norm{(M_{f \circ \phi_z} - C_{z_{\gamma}}P_{\alpha}M_fC_{z_{\gamma}}^{-1})\1}_{L^p_{\alpha}} + \norm{C_{z_{\gamma}}T_fC_{z_{\gamma}}^{-1}\1 - h_x}_{L^p_{\alpha}}\\
&= \norm{C_{z_{\gamma}}(I-P_{\alpha})M_fC_{z_{\gamma}}^{-1}\1}_{L^p_{\alpha}} + \norm{C_{z_{\gamma}}T_fC_{z_{\gamma}}^{-1}\1 - h_x}_{L^p_{\alpha}}\\
&= \norm{H_fC_{z_{\gamma}}^{-1}\1}_{L^p_{\alpha}} + \norm{C_{z_{\gamma}}T_fC_{z_{\gamma}}^{-1}\1 - h_x}_{L^p_{\alpha}}\\
&\to 0
\end{align*}
as $z_{\gamma} \to x$. Now, as in the proof of Proposition \ref{prop2}, this implies $M_{f \circ \phi_{z_{\gamma}}} \to M_{h_x}$ strongly. As $h_x \in X^p_{\alpha}$, we have $(I-P_{\alpha})M_{h_x} = 0$ on $X^p_{\alpha}$. It follows
\begin{equation*}
\norm{H_{f \circ \phi_{z_{\gamma}}}g}_{L^p_{\alpha}} = \norm{(I-P_{\alpha})M_{f \circ \phi_{z_{\gamma}}}g}_{L^p_{\alpha}} \to 0
\end{equation*}
for all $g \in X^p_{\alpha}$ as $z_{\gamma} \to x$. As the net $(z_{\gamma})$ was arbitrary, this implies (vi). A simple algebraic computation shows
\begin{equation*}
C_zH_fC_z^{-1} - H_{f \circ \phi_z} = C_z(C_z^{-1}P_{\alpha} - P_{\alpha}C_z^{-1})M_{f \circ \phi_z}
\end{equation*}
and therefore (vi) implies (ii) by Lemma \ref{lem:thm1}.
\end{proof}

\begin{corollary} \label{cor:thm1_2}
Let $f \in L^{\infty}(\Omega)$. Then the compactness of $H_f : X^p_\alpha \to L^p_\alpha$ does not depend on $p$ or $\alpha$.
\end{corollary}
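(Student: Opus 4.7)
My plan is to deduce the corollary directly from the equivalence (i)$\Leftrightarrow$(iii) in Theorem~\ref{thm1}: compactness of $H_f \from X^p_\alpha \to L^p_\alpha$ is equivalent to the statement that for every $x \in \beta\Omega \setminus \Omega$ there exists a bounded analytic function $h_x$ with $\norm{f \circ \phi_{z_\gamma} - h_x}_{L^p_\alpha} \to 0$ along every net $z_\gamma \to x$ in $\Omega$. It therefore suffices to show that this last condition is independent of $(p,\alpha)$.

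Fix two admissible parameter pairs $(p,\alpha)$ and $(p',\alpha')$, and write $\mu, \mu'$ for the associated measures on $\Omega$ from Section~\ref{preliminaries}. In both the Fock and Bergman settings these are probability measures with continuous, strictly positive densities with respect to Lebesgue measure, and hence mutually absolutely continuous. Assume condition (iii) holds for $(p,\alpha)$, fix $x \in \beta\Omega \setminus \Omega$ and a net $z_\gamma \to x$, and set $g_\gamma := f \circ \phi_{z_\gamma} - h_x$. Then $\norm{g_\gamma}_\infty \leq 2\norm{f}_\infty =: M$ and $\norm{g_\gamma}_{L^p_\alpha} \to 0$. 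For any compact $K \subset \Omega$ I split
\begin{equation*}
	\norm{g_\gamma}_{L^{p'}_{\alpha'}}^{p'} \leq \int_K \abs{g_\gamma}^{p'}\,d\mu' + M^{p'}\mu'(\Omega \setminus K).
\end{equation*}
On $K$ the Radon--Nikodym derivative $d\mu'/d\mu$ is continuous and hence bounded by some constant $C_K$, and since $\mu$ is a probability measure and $\abs{g_\gamma} \leq M$, either the trivial inequality $\abs{g_\gamma}^{p'} \leq M^{p'-p}\abs{g_\gamma}^p$ (if $p' \geq p$) or Hölder's inequality on $(\Omega,\mu)$ with exponent $p/p'$ (if $p' < p$) gives $\int_K \abs{g_\gamma}^{p'}\,d\mu \leq C_{M,p,p'}\norm{g_\gamma}_{L^p_\alpha}^{\min(p,p')}$. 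Exhausting $\Omega$ by compacts I can make the tail $M^{p'}\mu'(\Omega \setminus K)$ arbitrarily small, and then push $\gamma$ far enough along the net to make the first term arbitrarily small. Thus $\norm{g_\gamma}_{L^{p'}_{\alpha'}} \to 0$, so condition (iii) holds for $(p',\alpha')$ with the \emph{same} $h_x$. By symmetry, compactness is independent of $(p,\alpha)$.

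The main technical point is that when $p\alpha$ and $p'\alpha'$ differ substantially, the density $d\mu'/d\mu$ blows up globally --- at infinity in the Fock case, or near $\partial\Omega$ in the Bergman case --- so one cannot compare the two norms directly. The compact/tail splitting together with the uniform bound $\norm{g_\gamma}_\infty \leq 2\norm{f}_\infty$, which is intrinsic to the Hankel setting since $f \in L^\infty(\Omega)$, is what makes the argument go through without any deeper input beyond Theorem~\ref{thm1}.
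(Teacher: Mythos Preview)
Your proof is correct and follows essentially the same route as the paper: reduce to condition~(iii) of Theorem~\ref{thm1} and exploit the uniform bound $\norm{f \circ \phi_{z_\gamma} - h_x}_\infty \leq 2\norm{f}_\infty$ together with H\"older-type estimates to show that the condition is $(p,\alpha)$-independent. The paper's proof is a one-liner invoking ``standard estimates (H\"older)'' and a reference to \cite[Theorem~7]{StroeZhe}; your compact-exhaustion argument simply makes those estimates explicit, in particular covering the case where $d\mu'/d\mu$ is globally unbounded, which the paper leaves to the reader.
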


\begin{proof}
As $f \circ \phi_z - h_x$ is uniformly bounded in $z \in \Omega$, this follows from standard estimates (H\"older) and Theorem \ref{thm1} (iii) similarly as in \cite[Theorem 7]{StroeZhe}.
\end{proof}

In our final result we give a characterization of compact Hankel operators in terms of the classical $\VMO$-spaces, which are defined as in~\eqref{e:VMOp}. We split it into two theorems because there is a crucial difference between the two cases. In the Fock space, $H_f$ is compact if and only if $H_{\bar f}$ is compact as a consequence of Liouville's theorem. For the Bergman space, we cannot rely on Liouville and therefore need to additionally assume the compactness of $H_{\bar f}$.

\begin{theorem} \label{thm2}
If $f\in L^\infty(\Omega)$ and $\Omega = \C^n$, then {\rm (i)} to {\rm (vi)} in Theorem \ref{thm1} are further equivalent to
\begin{itemize}
	\item[\rm (vii)] $H_{\bar{f}} : F^p_\alpha \to L^p_\alpha$ is compact.
	\item[\rm (viii)] $\|f \circ \phi_z - \tilde{f}(-z)\|_{L^p_{\alpha}} \to 0$ as $z \to \partial\Omega$.
	\item[\rm (ix)] $f \in \VMO^p(\C^n) \cap L^{\infty}(\C^n)$.
\end{itemize}
\end{theorem}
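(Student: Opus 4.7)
The plan is to integrate the three new conditions (vii), (viii), (ix) into the equivalence chain of Theorem \ref{thm1} by appealing to Liouville's theorem on $\C^n$, which is the only new ingredient beyond Theorem \ref{thm1}. In the Fock setting the bounded analytic function $h_x$ produced by Theorem \ref{thm1}(iii) is entire on $\C^n$, hence a constant $c_x \in \C$; this is precisely the structural collapse that is unavailable in the Bergman case.

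To establish (i) $\Leftrightarrow$ (vii), I would argue that (i) gives via Theorem \ref{thm1}(iii) the convergence $\norm{f \circ \phi_{z_{\gamma}} - c_x}_{L^p_\alpha} \to 0$ along every net $z_{\gamma} \to x \in \beta\C^n \setminus \C^n$. Taking complex conjugates preserves $L^p_\alpha$-convergence, so $\bar f \circ \phi_{z_{\gamma}} \to \bar c_x$ in $L^p_\alpha$, and the constant $\bar c_x$ is itself a bounded analytic function on $\C^n$. This is Theorem \ref{thm1}(iii) for $\bar f$, hence $H_{\bar f}$ is compact by Theorem \ref{thm1}. The converse (vii) $\Rightarrow$ (i) is symmetric.

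For (i) $\Leftrightarrow$ (viii), recall that in the Fock case $\tilde f(-z) = \int_{\C^n} (f \circ \phi_z)\, \mathrm d\mu_\alpha$. Assuming (i) and using the constancy $h_x = c_x$, a H\"older estimate with respect to the Radon--Nikodym derivative $\mathrm d\mu_\alpha / \mathrm d\mu_{p\alpha/2}$ (which lies in $L^{p'}(\mu_{p\alpha/2})$, since a short calculation shows the exponent $p'(p/2-1) - p/2$ in its $p'$-th moment is always negative for $p' = p/(p-1)$) upgrades the $L^p_\alpha$-convergence of $f \circ \phi_{z_{\gamma}}$ to $c_x$ into convergence of $\tilde f(-z_{\gamma})$ to $c_x$, yielding $\norm{f \circ \phi_{z_{\gamma}} - \tilde f(-z_{\gamma})}_{L^p_\alpha} \to 0$ and thus (viii). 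Conversely, since $\tilde f$ is continuous and bounded on $\C^n$ it extends continuously to $\beta\C^n$, so $\tilde f(-z_{\gamma}) \to \tilde f(-x) =: c_x$ along any net $z_{\gamma} \to x$; combined with (viii) this produces $\norm{f \circ \phi_{z_{\gamma}} - c_x}_{L^p_\alpha} \to 0$, which is Theorem \ref{thm1}(iii) with the bounded analytic (constant) function $h_x = c_x$, and hence (i).

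The final equivalence (viii) $\Leftrightarrow$ (ix) is a well-known characterization of $\VMO^p(\C^n) \cap L^\infty$ via Berezin-type mean oscillation in $L^p_\alpha$: the case $p = 2$ is due to Stroethoff \cite{Stroethoff}, and the extension to general $1 < p < \infty$ appears in \cite{PeSchuVi}; I would simply invoke these. The main obstacle I anticipate is the measure-theoretic step in (i) $\Rightarrow$ (viii), where one must pass from $L^p(\mu_{p\alpha/2})$-convergence to convergence of the $\mu_\alpha$-integral of a uniformly bounded sequence; this is resolved either by the H\"older estimate described above or, equivalently, by a subsequential dominated-convergence argument that exploits the fact that $\mu_\alpha$ and $\mu_{p\alpha/2}$ have the same null sets.
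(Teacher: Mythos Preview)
Your proof is correct and follows essentially the same overall plan as the paper: Liouville collapses $h_x$ to a constant, which makes Theorem~\ref{thm1}(iii) symmetric in $f$ and $\bar f$ (giving (i)$\Leftrightarrow$(vii)), and identifies the constant with a value of the Berezin transform (giving the link to (viii)); (viii)$\Leftrightarrow$(ix) is then cited from the literature. The only technical differences are local. For (i)$\Rightarrow$(viii) the paper invokes the covariance $\widetilde{f\circ\phi_z}=\tilde f\circ\phi_z$ and $\widetilde{h_x}=h_x$ to identify $\lim\tilde f(-z_\gamma)=h_x(0)$, whereas you obtain the same limit via a H\"older estimate against $\mathrm d\mu_\alpha/\mathrm d\mu_{p\alpha/2}\in L^{p'}(\mu_{p\alpha/2})$; both are valid and in fact the paper's step implicitly relies on the same measure-theoretic passage you spell out. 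For the converse the paper takes a shorter route than yours: instead of extending $\tilde f$ to $\beta\C^n$ to recover condition~(iii), it observes directly that $\norm{(I-P_\alpha)(f\circ\phi_z)}\le(1+\norm{P_\alpha})\norm{f\circ\phi_z-\tilde f(-z)}$, which gives (viii)$\Rightarrow$(iv) without any net or compactification argument.
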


\begin{proof}
Bounded analytic functions on $\C^n$ are constant by Liouville's theorem. Hence condition (iii) in Theorem \ref{thm1} is symmetric in $f$ and $\bar{f}$ and therefore $f \in \Ac$ is equivalent to (vii).

To show that $f \in \Ac$ implies (viii), take $x \in \beta\Omega$ and choose a net $(z_{\gamma})$ in $\Omega$ that converges to $x$. Since $\widetilde{f \circ \phi_z} = \tilde{f} \circ \phi_z$ for all $z \in \Omega$ and $\tilde{g} = g$ for analytic functions $g$, we get
\begin{equation*}
\lim\limits_{z_{\gamma} \to x} \tilde{f}(\phi_{z_{\gamma}}(w)) = \lim\limits_{z_{\gamma} \to x} \widetilde{f \circ \phi_{z_{\gamma}}}(w) = \widetilde{h_x}(w) = h_x(w)
\end{equation*}
for every $w \in \Omega$. In particular, $\tilde{f}(-z_{\gamma})$ converges to $h_x(0)$. As $h_x$ is a constant, Theorem \ref{thm1} (iii) implies (viii). 

Now assume (viii). As $\tilde{f}(-z)$ is a constant and $P_{\alpha}$ is bounded, we get
\begin{align*}
\norm{(I-P_{\alpha})(f \circ \phi_z)}_{L^p_{\alpha}} &\leq \norm{f \circ \phi_z - \tilde{f}(-z)}_{L^p_{\alpha}} + \norm{P_{\alpha}\left(\tilde{f}(-z) - f \circ \phi_z\right)}_{L^p_{\alpha}}\\
&\leq (1+\norm{P_{\alpha}})\norm{f \circ \phi_z - \tilde{f}(-z)}_{L^p_{\alpha}}\\
&\to 0
\end{align*}
as $z \to \partial\Omega$, hence Theorem \ref{thm1} (iv) holds.

The equivalence of (viii) and (ix) is standard and can be found in \cite[Theorem 3]{PeSchuVi} and its generalization~\cite{L18}.
\end{proof}

\begin{theorem} \label{thm3}
If $f\in L^\infty(\Omega)$ and $\Omega$ is a bounded symmetric domain, then the following are equivalent:
\begin{itemize}
	\item[\rm (vii)] $H_f, H_{\bar f}: A^p_{\alpha} \to L^p_{\alpha}$ are both compact.
	\item[\rm (viii)] $\|f \circ \phi_z - \tilde{f}(z)\|_{L^p_{\alpha}} \to 0$ as $z \to \partial\Omega$.
	\item[\rm (ix)] $f \in \VMO^p(\Omega) \cap L^{\infty}(\Omega)$.
\end{itemize}
\end{theorem}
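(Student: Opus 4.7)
The plan is to reduce Theorem~\ref{thm3} to Theorem~\ref{thm1} in the spirit of the proof of Theorem~\ref{thm2}, replacing the appeal to Liouville's theorem by the joint hypothesis that $H_f$ and $H_{\bar f}$ are both compact. The pivotal observation is this: if (vii) holds, then for every $x \in \beta\Omega \setminus \Omega$ and every net $z_\gamma \to x$, Theorem~\ref{thm1}(iii) applied to $f$ produces a bounded analytic $h_x$ with $\|f \circ \phi_{z_\gamma} - h_x\|_{L^p_\alpha} \to 0$, while Proposition~\ref{prop2}(iii), evaluated at $\1 \in X^p_\alpha$, yields $\|\bar f \circ \phi_{z_\gamma} - \overline{h_x}\|_{L^p_\alpha} \to 0$. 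Applying Theorem~\ref{thm1}(iii) now to $\bar f$ and using uniqueness of limits, $\overline{h_x}$ is also bounded analytic; since $\Omega$ is connected, a function that is simultaneously holomorphic and antiholomorphic must be constant, so $h_x \equiv c_x$ for some $c_x \in \C$. This is the one place where the bounded-symmetric-domain structure really matters, and it is exactly where the hypothesis on $H_{\bar f}$ is used.

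To obtain (viii) from (vii) I will invoke the standard Berezin-transform intertwining identity $\widetilde{f \circ \phi_z} = \tilde f \circ \phi_z$ on a bounded symmetric domain, which follows from the automorphism composition law $\phi_z \circ \phi_w = U_{z,w} \circ \phi_{\phi_z(w)}$, where $U_{z,w}$ is a linear isometry fixing $0$ that preserves $d\mu_\alpha$. Together with the fact that constants are fixed by the Berezin transform, this gives $\tilde f(\phi_{z_\gamma}(w)) \to c_x$ pointwise in $w$; specialising to $w = 0$ and using $\phi_{z_\gamma}(0) = z_\gamma$ yields $\tilde f(z_\gamma) \to c_x$, so the triangle inequality produces $\|f \circ \phi_{z_\gamma} - \tilde f(z_\gamma)\|_{L^p_\alpha} \to 0$. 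A standard compactness argument in $\beta\Omega$ (were the convergence in (viii) to fail, passing to a subnet converging in $\beta\Omega \setminus \Omega$ would yield a contradiction) upgrades this to the desired net convergence as $z \to \partial\Omega$. For the converse (viii) $\Rightarrow$ (vii), I will verify condition (iv) of Theorem~\ref{thm1} for both $f$ and $\bar f$: since $\tilde f(z)$ is a scalar and $P_\alpha$ is bounded,
\begin{equation*}
\|(I-P_\alpha)(f \circ \phi_z)\|_{L^p_\alpha} \leq (1+\|P_\alpha\|)\|f \circ \phi_z - \tilde f(z)\|_{L^p_\alpha},
\end{equation*}
exactly as in the proof of Theorem~\ref{thm2}; conjugating (viii) and using invariance of the $L^p_\alpha$-norm under complex conjugation gives the analogous decay for $\bar f$, whence Theorem~\ref{thm1} makes both $H_f$ and $H_{\bar f}$ compact.

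Finally, the equivalence (viii) $\Leftrightarrow$ (ix) is the standard characterization of bounded $\VMO^p$-functions via mean oscillation along Bergman automorphisms, which goes back to Zhu on $\B_n$ and is available on general bounded symmetric domains through the references already mentioned in the Introduction. The main technical hurdles I anticipate are confirming the Berezin intertwining identity in the generality of an arbitrary bounded symmetric domain (but this is immediate from the composition law together with $K$-invariance of the Jordan triple determinant $h$) and pinning down a clean citation for (viii) $\Leftrightarrow$ (ix) outside the ball. Conceptually, however, the whole argument is routine modulo the rigidity step isolating $h_x$ to a constant, which cleanly explains why the statement for $A^p_\alpha$ must carry the extra hypothesis on $H_{\bar f}$ that is automatic in the Fock setting.
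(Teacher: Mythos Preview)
Your proposal is correct and follows essentially the same route as the paper: the paper also argues that the joint compactness of $H_f$ and $H_{\bar f}$ forces the function $h_x$ from Theorem~\ref{thm1}(iii) to be constant, then recycles the argument of Theorem~\ref{thm2} for (vii)$\Leftrightarrow$(viii) and cites Zhu for (viii)$\Leftrightarrow$(ix). Your write-up is simply more explicit about the rigidity step (holomorphic and antiholomorphic implies constant on a connected domain) and about the Berezin intertwining identity $\widetilde{f\circ\phi_z}=\tilde f\circ\phi_z$ on bounded symmetric domains, both of which the paper absorbs into the phrase ``by the same argument as in Theorem~\ref{thm2}.''
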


\begin{proof}
Assuming the compactness of both $H_f$ and $H_{\bar{f}}$ again implies that $h_x$ in Theorem \ref{thm1} (iii) is constant. Thus (vii) and (viii) are equivalent by the same argument as in Theorem \ref{thm2}. The equivalence of (viii) and (ix) was proven in~\cite{MR1178032} for $\Omega = \B_n$. A similar proof works for bounded symmetric domains; we omit the details.
\end{proof}

\begin{remark}
It is actually not difficult to see that all statements in Theorem \ref{thm2} and Theorem \ref{thm3} are independent of $p$ and $\alpha$ (cf.~Corollary \ref{cor:thm1_2}). Therefore it would have been sufficient to cite the corresponding results for $p = 2$ in \cite{MR1073289} and \cite{MR0882716}. However, we decided to include short proofs in order to stress this key difference between Bergman and Fock spaces caused by Liouville's theorem.
\end{remark}

\section*{Acknowledgements}
The authors would like to thank the editor and the anonymous referees for their valuable suggestions that significantly improved the quality of the paper. This project has received funding from the European Union's Horizon 2020 research and innovation programme under the Marie Sklodowska-Curie grant agreement No 844451. Virtanen was supported in part by Engineering and Physical Sciences Research Council grant EP/T008636/1.

\begin{flushleft}
Department of Mathematics, University of Reading, Reading RG6 6AX, England\\
\textit{Email:} \texttt{r.t.hagger@reading.ac.uk} and \texttt{j.a.virtanen@reading.ac.uk}
\end{flushleft}
\end{document}